\providecommand{\keywords}[1]{\textbf{\textit{Key words and phrases }} #1}
\providecommand{\subjclass}[1]{\textbf{\textit{2010 Mathematics Subject Classification.}} #1}
\theoremstyle{definition}
\newtheorem{theo}{Theorem}[section]
\newtheorem{pr}[theo]{Proposition}
 \newtheorem{lem}[theo]{Lemma}
 \newtheorem{coro}[theo]{Corollary}
\theoremstyle{remark}
\newtheorem{rema}[theo]{Remark}
\theoremstyle{definition}
\newtheorem{defi}[theo]{Definition}
\numberwithin{equation}{section}
\newcommand\cu{\underline{C}}
\newcommand\du{\underline{D}}
\newcommand\bu{\underline{B}}
\newcommand\hu{\underline{H}}
\newcommand\obj{\operatorname{Obj}}
\newcommand\id{\operatorname{id}}
\DeclareMathOperator\adfu{\operatorname{AddFun}}
\DeclareMathOperator\adfur{\operatorname{Fun}_R}
\DeclareMathOperator\kar{\operatorname{Kar}}
 \DeclareMathOperator\ke{\operatorname{Ker}}
\newcommand\hw{{\underline{Hw}}}
\newcommand\hrt{{\underline{Ht}}}
\newcommand\alo{{\aleph_1}}
\newcommand\modd{\operatorname{Mod}}
\newcommand\z{{\mathbb{Z}}}
 \newcommand\lan{\langle}
\newcommand\ra{\rangle}
\newcommand\al{\alpha}
\newcommand\ns{\{0\}}
\newcommand\ab{\operatorname{Ab}}
\newcommand\cp{\mathcal{P}}
\newcommand\cq{\mathcal{Q}}
\newcommand\opp{^{op}}
\newcommand\brab{]^{\operatorname{cl}}}
\begin{document}

\title
 {On weakly negative subcategories,  weight structures, and (weakly) approximable triangulated categories}
\author{Mikhail V. Bondarko, Sergei V. Vostokov
   \thanks{ 
	This work was funded by the Russian Science Foundation under  grant no. 16-11-
00200.}}\maketitle
\begin{abstract} 
In this note we 
 prove that certain triangulated categories are (weakly) approximable in the sense 
of A. Neeman. We prove that a triangulated $\cu$ that is compactly generated by a single object $G$ is weakly approximable if $\cu(G,G[i])=\ns$ for $i>1$ (we say that $G$ is weakly negative 
if this assumption is fulfilled; the case where the equality $\cu(G,G[1])=\ns$ is fulfilled as well was mentioned by Neeman himself). Moreover, if $G\cong \bigoplus_{0\le i\le n}G_i$ and $\cu(G_i,G_j[1])=\ns$ whenever 
 $i\le j$ then $\cu$ is also approximable. 

 The latter result can be useful since (under a few more additional assumptions) it allows to characterize a certain explicit subcategory of $\cu$ 
as the category of {\it finite} cohomological functors from the subcategory $\cu^c$ of compact objects of $\cu$ into 
 $R$-modules (for a noetherian commutative ring $R$ such that $\cu$ is $R$-linear). One may apply 
 this statement to the construction of certain adjoint functors and $t$-structures. 

Our proof of (weak) approximability of $\cu$ under the aforementioned assumptions is 
closely related to  ({\it weight decompositions} for) certain {\it (weak) weight structures}, and we discuss this relationship in detail. 
   
\end{abstract}
\subjclass{Primary 18E30  Secondary 18E40,  18F20, 16E45, 18E35, 55P43.}

\keywords{Triangulated category, approximable triangulated category, weight structure, negative subcategory, $t$-structure, heart, weak weight structure, weight decomposition.} 


 \section*{Introduction}
The goal of this note is to prove that certain triangulated categories are {\it (weakly) approximable} 
in a sense introduced by A. Neeman. The importance of this property comes from Theorem 0.3 of \cite{neesat}. We will say more on  that theorem 
 in \S\ref{sappl} below; now we only recall that 
 is says the following: for an approximable $R$-linear triangulated category $\cu$ (where $R$ is a noetherian coefficient ring)  that satisfies certain conditions one can describe explicitly a subcategory $\cu_c^b\subset \cu$ such that for the triangulated subcategory $\cu^c$ of compact objects of $\cu$ the restricted Yoneda functor $\mathcal{Y}:\cu\to \adfur(\cu^c{}\opp,R-\modd)$ (the latter is the category of $R$-linear functors into $R$-modules, whereas $\mathcal{Y}$ sends an object $N$ of $\cu$ into the restriction of the functor it represents to $\cu^c$)  gives a full embedding of $\cu_c^b$ into $\adfur(\cu^c{}\opp,R-\modd)$. 
 Moreover,    
 a  cohomological functor $H: \cu^c{}\opp\to R-\modd$ belongs to the image of $\mathcal{Y}$ whenever the $R$-module $\bigoplus_{i\in \z}H(M[i])$ is finitely generated for any $M\in \obj \cu^c$. This ("Brown-type") representability result has nice applications to the construction of certain adjoint functors (see Corollary 0.4 and Remark 0.7 of ibid.) and $t$-structures (in particular, for derived categories of quasi-coherent sheaves). Moreover, 
 some statements related to loc. cit. are also fulfilled if $\cu$ is only weakly approximable (in the sense of ibid.).

Now, Remark 3.3 of ibid. states that $\cu$ is approximable whenever it is {\it compactly generated} by an object $G$ such that $G\perp G[i]$ for all $i>0$ (i.e., there are only zero $\cu$-morphisms between $G$ and $G[i]$ in this case; we will say that the set $\{G\}$ is {\it negative} in $\cu$). 
The authors' attention to this matter was attracted by the following observation: for any $(\cu,G)$ satisfying these conditions there exists a {\it weight structure} $w=(\cu_{w\le 0},\cu_{w\ge 0})$ on $\cu$ such that $\cu_{w\le 0}$ (resp. $\cu_{w\ge 0}$) is the smallest class of objects of $\cu$ that is closed with respect to extensions and coproducts, and contains $G[i]$ for all $i\le 0$ (resp. for $i\ge 0$; see \S\ref{swws} below). Moreover,  in this case the "natural" choices of distinguished triangles in the definition of approximability (see Definition 0.21(ii) of ibid.) are just {\it weight decompositions} of elements of $\cu_{w\ge 0}$. This weight decomposition argument appears to be the optimal way to justify the aforementioned Remark 3.3 of ibid., and it is closely related to the proof of   \cite[Theorem 2.2.1(1)]{bsnew}. Furthermore, if one weakens the negativity assumption on $G$ and demands that  $G\perp G[i]$ for $i>1$ only (we call this condition the {\it weak negativity} one) then one can still get a {\it weak weight structure} on $\cu$ (see Remark \ref{rstws}(\ref{iweak}) below), and obtain that $\cu$ is  weakly approximable. Under 
 the additional assumption 
 that $G\cong \bigoplus_{0\le i\le n}G_i$, where $G_i\perp G_j[1]$ for $i\le j$ (this is certainly weaker than the negativity of $\{G\}$)  we are 
 able to prove that $G$ is approximable; see Corollary \ref{capprox}(2) and Remark \ref{rex2}(3). 

The authors believe that the weight structure view on this matter can be useful since weight structures "glue" and "localize" well (one may say that "weight structures localize better than $t$-structures"); see Remark \ref{rstws}(\ref{iwloc}). Note also that in Theorem \ref{t1} we also consider the case where $\cu$ is (compactly) generated by a not necessarily 
 small weakly negative class of compact objects.

Let us  now describe the contents  of the paper. 

In \S\ref{snotata}  we introduce some categorical definitions and notation. 

In \S\ref{smash} we recall some more definitions (mostly related to triangulated categories that are {\it smashing}, i.e., closed with respect to coproducts) and also a few lemmas.

In \S\ref{smain} we formulate our main results in terms of certain extension-closures, and discuss examples. 

In \S\ref{sappr} we translate the central Theorem \ref{t1} into the language used in \cite{neesat} (this includes $t$-structures) and obtain a statement on (weak) approximability. We also demonstrate   that $\cu$ is (weakly approximable but) not necessarily approximable if the set $\{G\}$ is only weakly negative.

In \S\ref{sappl} we recall 
 the relation of approximability to 
the aformentioned Brown representability-type Theorem 0.3 of ibid. 

In \S\ref{swws} we recall the notions of a weight structure and a weak weight structure, and discuss their relationship to our results (along with their properties).

The authors are deeply grateful to prof. A. Neeman for his very interesting talk made in St. Petersburg, that inspired the current note.

\section{Some categorical notation and definitions}\label{snotata}

\begin{itemize}

\item All 
 coproducts in this paper will be small.

\item Given a category $C$ and  $X,Y\in\obj C$  we will write $C(X,Y)$ for  the set of morphisms from $X$ to $Y$ in $C$.

\item For categories $C'$ and $C$ we write $C'\subset C$ if $C'$ is a full 
subcategory of $C$.

\item Given a category $C$ and  $X,Y\in\obj C$, we say that $X$ is a {\it
retract} of $Y$ 
 if $\id_X$ can be 
 factored through $Y$.\footnote{Certainly,  if $C$ is triangulated 
then $X$ is a retract of $Y$ if and only if $X$ is its direct summand.}\ 

\item A 
 class of object $D$ in an additive category $C$ 
is said to be {\it retraction-closed} in $C$ if it contains all $C$-retracts of its elements. 

\item  For any $(C,D)$ as above we will write 
$\kar_{C}(D)$ 
for the class of all $C$-retracts of 
 elements of $D$. 

\item The symbol $\cu$ below will always denote some triangulated category. 
 Moreover, we will always assume that $\cu$ is a {\it smashing},  
 that is, it is closed with respect to (small) 
 coproducts.


\item For $X,Y\in \obj \cu$ we will write $X\perp Y$ if $\cu(X,Y)=\ns$. 

For
$D,E\subset \obj \cu$ we write $D\perp E$ if $X\perp Y$ for all $X\in D,\
Y\in E$.


\item For any  $A,B,C \in \obj\cu$ we will say that $C$ is an {\it extension} of $B$ by $A$ if there exists a distinguished triangle $A \to C \to B \to A[1]$.

\item A class $\cp\subset \obj \cu$ is said to be  {\it extension-closed}
    if it 
		is closed with respect to extensions and contains $0$.  

\end{itemize}


\section{Some definitions and lemmas for (smashing) triangulated categories}\label{smash}

Let us give some more definitions.

\begin{defi}\label{dcomp}
Let $\cu$ be a smashing triangulated category, $M\in \obj \cu$, and $\cp,\cp'\subset \obj \cu$.
 
\begin{enumerate}

\item\label{idstar}
We will write $\cp\star \cp'$  for the class of all extensions of elements of $\cp'$ by elements of $\cp$.

We will say that $\cp$ is {\it smashing} (in $\cu$) if it is closed with respect to $\cu$-coproducts.

\item\label{icomp}
We will say that $M$ is {\it compact} (in $\cu$) if  the functor $\cu(M,-):\cu\to \ab$ respects coproducts.

\item\label{ibr}
We will write  $ [\cp\brab$ for the smallest smashing extension-closed subclass of $\obj \cu$ that contains $\cp$, and call this class the {\it big extension-closure} of $\cp$. 

\item\label{icompg}
We will say that $\cp$ {\it compactly generates} $\cu$ 
if 
elements of $\cp$ are compact and  the class  $ [\cup_{i\in\z}\cp[i]\brab$ equals the whole $\obj \cu$.\footnote{We do not have to assume that this class $\cp$ is a set for the purposes of this paper. Note however that the main application of this theorem is Corollary \ref{capprox}, where we assume $\cp$ to be finite.}

\end{enumerate}
\end{defi}

\begin{pr}\label{pstar}
Assume that $A$ and $B$ 
 are some classes of objects of $\cu$. 

\begin{enumerate}
\item\label{iort}
Assume that $A\perp B$. Then $[A \brab\perp B$ as well.

\item\label{iortc}
Assume that $A\perp B$ and all elements of $A$ are compact. Then $[A \brab\perp [B \brab$ as well.

\item\label{iass}
 $(X \star Y) \star Z = (X\star Y) \star Z$ for any classes $X,Y,Z\subset \obj \cu$, that is, the operation $\star$ is associative on subclasses of $\obj \cu$. 

\item\label{istar}
 Assume that $A$ and $B$ are extension-closed and  $A\perp B[1]$. Then the class $A\star B$  
 is extension-closed as well.

\item\label{istarsm} 
Assume 
that 
  $A$ and $B$ are smashing. Then  $A\star B$  is smashing as well.

\item\label{istarkar} 
$\kar_{\cu}(A)\star \kar_{\cu}(B)\subset \kar_{\cu}(A\star B)$.

\end{enumerate}
\end{pr}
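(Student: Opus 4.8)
The six assertions form a small ``calculus of $\star$ and big extension-closures'', so the plan is to treat them by a few recurring moves rather than independently. For the two orthogonality statements \ref{iort} and \ref{iortc} the engine is the minimality built into $[-\brab$: to prove $[A\brab\perp Y$ for a fixed $Y$ it suffices to show that the orthogonality class $\{X\in\obj\cu:\ X\perp Y\}$ contains $A$, is extension-closed, and is smashing, and then to invoke that $[A\brab$ is the smallest class with these three properties. Extension-closedness of this orthoclass is immediate from the long exact sequence obtained by applying the cohomological functor $\cu(-,Y)$ to an extension triangle; smashing-ness follows because $\cu(-,Y)$ converts $\cu$-coproducts into products, so a coproduct of objects killing $Y$ still kills $Y$. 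Running this for every $Y\in B$ yields \ref{iort}.

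Assertion \ref{iortc} needs one extra idea, and this is where I expect the only real subtlety in the orthogonality half to lie. To enlarge $B$ to $[B\brab$ I would have to know that $\{Y:\ X\perp Y\}$ is smashing, which requires $\cu(X,-)$ to respect coproducts, i.e.\ $X$ compact. Since only the elements of $A$ (and not all of $[A\brab$) are assumed compact, I cannot enlarge both sides at once. So I would first prove the half-enlarged statement $A\perp[B\brab$: for each $X\in A$, which is compact and satisfies $X\perp B$, the orthoclass $\{Y:\ X\perp Y\}$ is extension-closed and smashing and contains $B$, hence contains $[B\brab$. Once $A\perp[B\brab$ is in hand, I feed it back into \ref{iort} with $[B\brab$ in the role of $B$ to conclude $[A\brab\perp[B\brab$.

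Assertion \ref{iass} is the standard associativity of $\star$, which I would establish by one application of the octahedral axiom per inclusion. Given nested extension triangles realizing $W\in(X\star Y)\star Z$, the octahedron built on the composite $x\to c\to W$ produces an object $d$ sitting in triangles $x\to W\to d\to x[1]$ and $y\to d\to z\to y[1]$, exhibiting $d\in Y\star Z$ and $W\in X\star(Y\star Z)$; the reverse inclusion is the same octahedron read on the composite of the two structural maps (equivalently, the same argument in $\cu\opp$). The two purely formal items follow quickly: for \ref{istarsm} I use that a coproduct of distinguished triangles is distinguished (legitimate because $\cu$ is smashing), so the coproduct of extension triangles for a family in $A\star B$ is an extension triangle of $\bigoplus b_\alpha\in B$ by $\bigoplus a_\alpha\in A$; for \ref{istarkar} I take an extension of a summand $b'$ of $b\in B$ by a summand $a'$ of $a\in A$, write $a=a'\oplus a''$ and $b=b'\oplus b''$, and add to the given triangle the split triangle on $a''\oplus b''$, so that the resulting direct-sum triangle lies in $A\star B$ and has the original object as a retract.

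Finally \ref{istar} combines the earlier items. Containment of $0$ is trivial (the split triangle on $0$), so it suffices to show closure under extensions, i.e.\ $(A\star B)\star(A\star B)\subseteq A\star B$; using associativity \ref{iass} I regroup this as $A\star(B\star A)\star B$. Here the orthogonality hypothesis $A\perp B[1]$ is decisive: the connecting map of any object of $B\star A$ lies in $\cu(A,B[1])=\ns$, hence vanishes, so every such object splits as $a\oplus b$ and $B\star A\subseteq A\star B$. Then extension-closedness of $A$ and $B$ (that is, $A\star A\subseteq A$ and $B\star B\subseteq B$) together with monotonicity of $\star$ collapse $A\star(A\star B)\star B=(A\star A)\star(B\star B)$ back into $A\star B$. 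The genuine obstacles I anticipate are the compactness bookkeeping in \ref{iortc} (one truly cannot enlarge both sides simultaneously) and keeping the octahedron in \ref{iass} correctly oriented so that the two connecting maps assemble into the triangle witnessing $Y\star Z$; everything else is formal manipulation of $\star$.
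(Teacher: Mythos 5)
Your proof is correct and takes essentially the same approach as the paper: the paper disposes of parts (1)--(5) by declaring them obvious or citing Lemma 1.3.10 of \cite{bbd} and Proposition 2.1.1 of \cite{bsnew}, and your arguments (orthogonality classes that are extension-closed and smashing, the octahedron for associativity, the vanishing connecting map plus regrouping for extension-closedness of $A\star B$, coproducts of distinguished triangles) are exactly the standard proofs underlying those citations. Your argument for part (6) --- summing the given triangle with the split triangle on the complementary summands --- coincides with the paper's own explicit proof of that part.
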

\begin{proof}
All of these statements are rather easy.

Assertions \ref{iort} and \ref{iortc} are obvious. 

Assertion \ref{iass} is essentially given by Lemma 1.3.10 of \cite{bbd}.

Assertions \ref{istar} and \ref{istarsm} immediately follow from  Proposition 2.1.1 of \cite{bsnew} (cf. also Remark 1.2.2 of \cite{neebook}). 

\ref{istarkar}. If $M\to X\to N\to M[1]$ is a distinguished triangle, where $M\bigoplus M'=M''\in A$ and $N\bigoplus N'=N''\in B$ then we can add it with the split distinguished triangle $M'\to M'  \bigoplus N'\to N'\to M'[1]$ to get a triangle  $M''\to M'  \bigoplus N'\bigoplus X\to N''\to M''[1]$. This obviously yields the result.
\end{proof}

\section{Our main results in terms of big extension-closures}\label{smain}

The following simple definition is important for this paper.

\begin{defi}\label{dneg}
We will say that a class of objects $\cp\subset\obj \cu$ is {\it weakly negative} (in $\cu$) if $\cp \perp (\cup_{i>1}\cp[i])$.

Moreover, we will  say  that $\cp$ is {\it negative} if we also have $\cp\perp \cp[1]$.
\end{defi}

Now we are able to establish our central results rather easily. We start from formulating them in terms of big extension-closures; we will relate them to $t$-structures (as mentioned in Definition 0.21 of \cite{neesat}) and to (weak) weight structures later. 

\begin{theo}\label{t1}
Let $\cp$ be a weakly negative class of compact objects of (a smashing triangulated category) $\cu$; 
 set $A=[\cp \brab$  and $B= [\cup_{i\ge 0}\cp[i] \brab$. 

1. Then $B=A\star B[1]$.

2. Assume that $\cp=\cup_{0\le i\le n}\cp_i$ (for some $n\ge 0$) and that for these subclasses 
we also have $\cp_i\perp \cp_j[1]$ whenever $i\le j$. Then for the classes $\cq_i$  consisting of all coproducts of elements of $\cp_i$ we have $\cq_0\star \cq_1\star\dots \star\cq_n=A$ (see Proposition \ref{pstar}(\ref{iass})).

\end{theo}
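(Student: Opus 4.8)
My plan is to prove each part by a pair of inclusions, in each case realizing the right-hand side as a smashing, extension-closed class containing the relevant generators and then invoking the minimality built into the definition of the big extension-closure. For part 1, the inclusion $A\star B[1]\subseteq B$ is the easy direction: since $\cp\subseteq \cup_{i\ge 0}\cp[i]$ we get $A=[\cp\brab\subseteq B$, and since shifting a big extension-closure by $[1]$ produces the big extension-closure of the shifted generators we get $B[1]=[\cup_{i\ge 1}\cp[i]\brab\subseteq B$; as $B$ is extension-closed this yields $A\star B[1]\subseteq B\star B\subseteq B$. For the reverse inclusion I would verify that $A\star B[1]$ is smashing, extension-closed, and contains $\cup_{i\ge 0}\cp[i]$, so that minimality of $B$ forces $B\subseteq A\star B[1]$. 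Smashing-ness is immediate from Proposition \ref{pstar}(\ref{istarsm}); the containment of the generators follows by forming trivial extensions with a zero term (using $\cp\subseteq A$, $\cup_{i\ge 1}\cp[i]\subseteq B[1]$, together with $0\in A$ and $0\in B[1]$). Extension-closedness follows from Proposition \ref{pstar}(\ref{istar}) once we know $A\perp B[2]$, and this is exactly where weak negativity is used: the hypothesis gives $\cp\perp \cup_{i\ge 2}\cp[i]$, whence Proposition \ref{pstar}(\ref{iortc}) (applicable since the objects of $\cp$ are compact) yields $A=[\cp\brab\perp [\cup_{i\ge 2}\cp[i]\brab=B[2]$.

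For part 2 write $D:=\cq_0\star\dots\star\cq_n$ (well-defined by the associativity in Proposition \ref{pstar}(\ref{iass})). The inclusion $D\subseteq A$ is routine: each $\cq_i\subseteq A$ because $A$ is smashing and contains $\cp_i$, and iterating the relation $A\star A\subseteq A$ (extension-closedness of $A$) gives $D\subseteq A$. The substance is $A\subseteq D$, for which I again aim to show that $D$ is smashing, extension-closed and contains $\cp$, and then appeal to minimality of $A$; the containment $\cp\subseteq D$ follows from $\cp_i\subseteq\cq_i\subseteq D$ via the same zero-term extensions as above, and smashing-ness from Proposition \ref{pstar}(\ref{istarsm}). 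So everything reduces to proving that $D$ is extension-closed.

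The key preliminary observation — and the point I expect to be the main obstacle — is that each $\cq_i$ is already extension-closed. Indeed, specializing $\cp_i\perp\cp_j[1]$ at $j=i$ gives $\cp_i\perp\cp_i[1]$, and since the objects of $\cp_i$ are compact this passes to coproducts to give $\cq_i\perp\cq_i[1]$; hence any triangle $X\to C\to Y\to X[1]$ with $X,Y\in\cq_i$ has vanishing connecting morphism and splits, so $C\cong X\bigoplus Y\in\cq_i$. The same compactness-and-coproduct argument yields $\cq_i\perp\cq_j[1]$ whenever $i\le j$. I would then run a downward induction on $i$ to show that each partial product $D_i:=\cq_i\star\dots\star\cq_n$ is smashing and extension-closed: the base case $D_n=\cq_n$ is handled above, and for the inductive step, using that $[1]$ commutes with $\star$ and that $W\perp Y$ together with $W\perp Z$ imply $W\perp(Y\star Z)$ (via the long exact sequence of a defining triangle), the relations $\cq_i\perp\cq_j[1]$ for all $j\ge i$ propagate to $\cq_i\perp D_{i+1}[1]$. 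Proposition \ref{pstar}(\ref{istar}),(\ref{istarsm}) then make $D_i=\cq_i\star D_{i+1}$ extension-closed and smashing. Taking $i=0$ shows that $D$ is extension-closed, completing the inclusion $A\subseteq D$ and hence the equality $\cq_0\star\dots\star\cq_n=A$.
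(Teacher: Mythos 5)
Your proposal is correct and takes essentially the same approach as the paper: both parts realize the relevant $\star$-products as smashing extension-closed classes via Proposition \ref{pstar}(\ref{istar}),(\ref{istarsm}) (with the required orthogonalities coming from compactness together with weak negativity, resp. the hypothesis $\cp_i\perp\cp_j[1]$) and then conclude by minimality of the big extension-closures. The only deviation is bookkeeping in part 2: the paper inducts upward on $n$, appending $\cq_n$ and deriving $\cq_0\star\dots\star\cq_{n-1}\perp\cq_n[1]$ from Proposition \ref{pstar}(\ref{iortc}) and the inductive identification of the partial product with $[\cup_{0\le i\le n-1}\cp_i\brab$, whereas you induct downward, prepending $\cq_i$ and propagating orthogonality through $\star$ by an elementary long-exact-sequence argument; both routes are equally valid.
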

\begin{proof}
1. According to Proposition \ref{pstar}(\ref{iortc}), $A\perp B[2]$. Applying parts \ref{istar} and \ref{istarsm} of this proposition we obtain that the class $A\star B[1]$ is smashing and extension-closed. Since it contains $\cp[i]$ for all $i\ge 0$, we obtain $B\subset A\star B[1]$. Lastly, this inclusion is an equality since $B$ is extension-closed. 

2. We prove the statement by induction on $n$.

 In the case $n=0$ it suffices to verify that the class $\cq$ of all coproducts of elements of $\cp_0$ is extension-closed.  This fact certainly reduces to $\cq\perp \cq[1]$, and the latter orthogonality is   immediate from Proposition \ref{pstar}(\ref{iortc}).

Now, let us assume that the assertion is fulfilled for all $n<n_0$, where $n_0>0$. 
For $n=n_0$ we argue similarly to the proof of assertion 1. The inductive assumption implies that the class 
  $\cq'=\cq_0\star \cq_1\star\dots \star\cq_{n-1}$ is smashing and extension-closed, and the same is true for $\cq_n$.
	Since $\cq'\perp \cq_n[1]$ immediately from Proposition \ref{pstar}(\ref{iortc}), applying parts \ref{istar} and \ref{istarsm} of that proposition we obtain that the class $A'=\cq_0\star \cq_1\star\dots \star\cq_n$ is smashing and extension-closed as well (here we apply once again the associativity provided by Proposition \ref{pstar}(\ref{iass})). It obviously follows that $A'=A$.
		\end{proof}

\begin{rema}\label{rex1}
1. To obtain an example for our theorem  one can start from a small\footnote{The authors suspect that smallness is not really necessary here; cf. Remark 2.2.2(2) of \cite{bsnew}.} {\it differential graded} category 
$\bu$ (see \S2.2 of \cite{dgk} or \S2.1 of \cite{mymot}) with $\obj \bu=\cp$ and such that the complex $\bu^*(M,N)$ is acyclic in 
degrees bigger than $1$; certainly, there exist plenty of categories of this sort (cf. part 2 of this remark). 
 Moreover, to obtain an example for Theorem \ref{t1}(2) we assume in addition that the first cohomology of $\bu^*(M,N)$  also vanishes if $M\in \cp_i$, $N\in \cp_j$, and $i\le j$ (recall that we assume $\cp$ to be equal to $\cup_{0\le i\le n}\cp_i$). Then the well-known statements mentioned in  \S3.5 of ibid. imply that we can take $\cu$ to be the {\it derived category} of $\bu$. 

More generally, one can also consider {\it spectral} examples; see Theorems  3.1.1 and 3.3.3 of \cite{schwmod}. 
We are mostly interested in the case where $\cp$ is finite (see \S\ref{sappr}) 
 and $\cu$ is   {\it monogenic}; being more precise, we assume that $\cu$ is compactly generated by the set  $\{G\}$, where $G=\bigoplus_{P\in \cp}P$. The  endomorphism spectrum of 
$G$ should have zero homotopy groups in degrees less than $-1$ (i.e., it is {\it $-2$-connected}); this corresponds to weak negativity of $\{G\}$. 
 The case where 
  $\{G\}$ is negative  (that corresponds to $n=0$ in part 2 of our theorem) is closely related to the so-called 
 connective stable homotopy theory as discussed in \S7 of \cite{axstab}; cf. also Remark 4.3.4(2) of \cite{bwcp}.

2. We will say more about examples and on their relationship to weight structures in \S\ref{swws} below. Yet we would like to recall right now (from \S2.7.2 of \cite{mymot})  that for any differential graded category $C$ there exists a unique "maximal" negative subcategory $C_-$ (it is not full unless $C$ is negative itself!) with $\obj C_=\obj C$; one sets 
$C_{-,i}(X,Y)$  to be zero  for $i>0$, to be equal to $C^i(X,Y)$ for $i<0$, and to be equal to $\ke \delta^0(C(X,Y))$ for $i=0$ (here $\delta^*$ is the corresponding differential).

To obtain a "more interesting" example for our theorem one should also "add" some morphisms of degree $1$. 
A certain problem here is that their compositions should be added as well, and then one should be careful to avoid non-zero $\cu$-morphisms between $\cp$ and $\cp[2]$. 
\end{rema}


\section{On the relationship to approximability}\label{sappr}

Now let us relate Theorem \ref{t1} to Definition 0.21 of \cite{neesat}. For this purpose we need a short reminder on $t$-structures. We start from an important existence statement, and recall the definitions later.

\begin{lem}\label{lts}
Let $\cp$ be a set of compact objects of (a smashing triangulated category) $\cu$. Then there exists a unique $t$-structure on $\cu$ (see Remark \ref{rts} below) such that $\cu^{t\le 0}=[\cup_{i\ge 0}\cp[i] \brab$.
\end{lem}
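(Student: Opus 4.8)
The plan is to exhibit the prescribed class as the \emph{aisle} of a $t$-structure, via the standard construction of the $t$-structure generated by a set of compact objects. Write $\mathcal{U}=[\cup_{i\ge 0}\cp[i]\brab$. By definition $\mathcal{U}$ is smashing and extension-closed, and it is moreover closed with respect to $[1]$: the class $\mathcal{U}[-1]$ is again smashing and extension-closed and contains $\cup_{i\ge -1}\cp[i]\supset\cup_{i\ge 0}\cp[i]$, so $\mathcal{U}\subset\mathcal{U}[-1]$ by the minimality defining $\mathcal{U}$. I would set $\cu^{t\le 0}=\mathcal{U}$ and $\cu^{t\ge 1}=\mathcal{U}\perpp$ (the right orthogonal); with these choices the orthogonality and shift axioms of a $t$-structure (in the sense of Remark \ref{rts}) hold by construction, the shift axiom for the coaisle following from $\mathcal{U}[1]\subset\mathcal{U}$. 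Since a $t$-structure is determined by its aisle, uniqueness is then automatic. Hence everything reduces to producing, for each $M\in\obj\cu$, a distinguished triangle $A\to M\to B\to A[1]$ with $A\in\mathcal{U}$ and $B\in\mathcal{U}\perpp$.

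To construct $B$ I would run a ``killing'' tower that uses the compactness of the generators. Put $B_0=M$; given $B_n$, let $E_n=\bigoplus P[i]$ with the coproduct taken over all $P\in\cp$, all $i\ge 0$, and all $f\in\cu(P[i],B_n)$, equipped with the tautological evaluation morphism $E_n\to B_n$, and set $B_{n+1}=\co(E_n\to B_n)$. As $\cp$ is a set, each $E_n$ is a small coproduct of elements of $\cup_{i\ge 0}\cp[i]$, so $E_n\in\mathcal{U}$; and by construction every morphism $P[i]\to B_n$ with $i\ge 0$ becomes zero after composition with $B_n\to B_{n+1}$ (it factors through the consecutive arrows $E_n\to B_n\to B_{n+1}$ of a triangle). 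Setting $B=\operatorname{hocolim}_n B_n$ (available since $\cu$ is smashing) and using that each $P[i]$ is compact, we obtain $\cu(P[i],B)=\inli_n\cu(P[i],B_n)=\ns$ for all $P\in\cp$ and $i\ge 0$; that is, $(\cup_{i\ge 0}\cp[i])\perp\{B\}$. Proposition \ref{pstar}(\ref{iort}) upgrades this to $\mathcal{U}\perp\{B\}$, i.e.\ $B\in\mathcal{U}\perpp$, as wanted.

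Finally I complete $M\to B$ to a triangle $A\to M\to B\to A[1]$ and must check $A\in\mathcal{U}$. Consider the fibres $A_n$ of the composites $M\to B_n$; thus $A_0=0$, and the octahedral axiom applied to $M\to B_n\to B_{n+1}$ yields distinguished triangles $A_n\to A_{n+1}\to E_n\to A_n[1]$. Since $\mathcal{U}$ is extension-closed and each $E_n\in\mathcal{U}$, induction gives $A_n\in\mathcal{U}$ for all $n$. Then $\operatorname{hocolim}_n A_n$ is the cone of a morphism $\bigoplus_n A_n\to\bigoplus_n A_n$ between objects of $\mathcal{U}$, hence lies in $\mathcal{U}$ (a cone of a morphism inside $\mathcal{U}$ is an extension of a $[1]$-shift of a $\mathcal{U}$-object by a $\mathcal{U}$-object, so closure under coproducts, $[1]$ and extensions applies); and the triangle $A\to M\to B\to A[1]$ is the homotopy colimit of the triangles $A_n\to M\to B_n\to A_n[1]$, which identifies $A\cong\operatorname{hocolim}_n A_n\in\mathcal{U}$. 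The main obstacle is precisely this last homotopy-colimit bookkeeping --- passing $\cu(P[i],-)$ through $\operatorname{hocolim}$ to see that $B$ is right-orthogonal, and matching the fibre of $M\to B$ with $\operatorname{hocolim}_n A_n$ so that the closure properties of $\mathcal{U}$ can be invoked; this is also the only point where compactness of $\cp$ is genuinely used. (One could alternatively cite the Alonso Tarr\'io--Jerem\'ias L\'opez--Souto Salorio existence theorem for $t$-structures generated by a set of objects.)
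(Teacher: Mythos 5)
Your route is the standard direct construction of the compactly generated $t$-structure, i.e.\ it is essentially a proof of the theorem that the paper itself invokes: the paper's entire proof of Lemma \ref{lts} is the citation ``This is Theorem A.1 of \cite{talosa}'' --- exactly your closing parenthetical. Most of your steps are correct: the closure of $\mathcal{U}=[\cup_{i\ge 0}\cp[i]\brab$ under $[1]$, the reduction of uniqueness to the aisle, the killing tower $B_{n+1}=\co(E_n\to B_n)$ (this is the only place where the hypothesis that $\cp$ is a \emph{set} enters, and you use it correctly), the compactness computation $\cu(P[i],\operatorname{hocolim}_n B_n)=\inli_n\cu(P[i],B_n)=\ns$ together with Proposition \ref{pstar}(\ref{iort}), and the octahedron-plus-induction showing that the fibres $A_n$ of $M\to B_n$ lie in $\mathcal{U}$. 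The gap is the very last step: the assertion that ``the triangle $A\to M\to B\to A[1]$ is the homotopy colimit of the triangles $A_n\to M\to B_n\to A_n[1]$, which identifies $A\cong\operatorname{hocolim}_n A_n$.'' In a general triangulated category cones are not functorial: the shift-by-one maps $1-s$ on $\bigoplus A_n$, $\bigoplus M$, $\bigoplus B_n$ do constitute an endomorphism of the coproduct triangle, but a morphism of triangles need not be ``good'' in Neeman's sense, so one cannot conclude that the three cones $\operatorname{hocolim}A_n$, $M$, $\operatorname{hocolim}B_n$ (with their canonical maps) fit into a distinguished triangle. Moreover you genuinely need this identification: knowing only that $B\in\mathcal{U}^{\perp}$ does not place the fibre of $M\to B$ in $\mathcal{U}$. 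Indeed, for any $U\in\mathcal{U}^{\perp}$ one may add the triangle $U[-1]\to 0\to U\to U$ to a correct decomposition triangle of $M$; the third vertex stays in $\mathcal{U}^{\perp}$, but the fibre acquires a summand $U[-1]$ which in general is not in $\mathcal{U}$.

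The standard repair reverses your order of construction. Set $A'=\operatorname{hocolim}_n A_n$; your argument that $A'\in\mathcal{U}$ is fine. Since the fold map $\bigoplus A_n\to M$ kills $1-s$, it induces a map $A'\to M$ whose composition with each $A_n\to A'$ is the given $A_n\to M$; complete it to a distinguished triangle $A'\to M\to B'\to A'[1]$. Now prove $B'\in\mathcal{U}^{\perp}$ directly from the five-term exact sequence obtained by applying $\cu(P[i],-)$, $P\in\cp$, $i\ge 0$: (a) every $f\colon P[i]\to M$ has vanishing composite $P[i]\to M\to B_1$ (by the very construction of $E_0$), hence lifts to $A_1$ and therefore to $A'$, so $\cu(P[i],A')\to\cu(P[i],M)$ is surjective; (b) any $g\colon P[i]\to A_n[1]$ that dies in $M[1]$ factors through $B_n\to A_n[1]$, and since every $P[i]\to B_n$ dies in $B_{n+1}$, the image of $g$ in $\cu(P[i],A_{n+1}[1])$ vanishes; by compactness $\cu(P[i],A'[1])=\inli_n\cu(P[i],A_n[1])$, so $\cu(P[i],A'[1])\to\cu(P[i],M[1])$ is injective. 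Exactness then forces $\cu(P[i],B')=\ns$, and Proposition \ref{pstar}(\ref{iort}) upgrades this to $\mathcal{U}\perp\{B'\}$. This yields the required decomposition without ever identifying $B'$ with $\operatorname{hocolim}B_n$. Alternatively, do what the paper does and simply quote Theorem A.1 of \cite{talosa}.
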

\begin{proof}
This is Theorem A.1 of \cite{talosa}.
\end{proof}

\begin{rema}\label{rts} 1. For the convenience of the reader (and to fix  notation), we recall the definition of a $t$-structure.

So, a couple of classes  $(\cu^{t\ge 0},\cu^{t\le 0})$ of objects of $ \cu$ is said to be a $t$-structure $t$ (on $\cu$)  if 
they satisfy the following conditions:

(i) $\cu^{t\ge 0} $ and $\cu^{t\le 0}$ are strict, i.e., contain all objects of $\cu$ isomorphic to their elements.

(ii) $\cu^{t\ge 0}\subset \cu^{t\ge 0}[1]$ and $\cu^{t\le
0}[1]\subset \cu^{t\le 0}$.

(iii)  $\cu^{t\le 0}[1]\perp \cu^{t\ge 0}$.

(iv) For any $M\in\obj \cu$ there exists a  
 distinguished triangle
$L_tM\to M\to R_tM{\to} L_tM[1]$
 such that $L_tM\in \cu^{t\le 0}, R_tM\in \cu^{t\ge 0}[-1]$.

2. Recall also that for $i\in \z$ the class $\cu^{t\le 0}[-i]$ (resp. $\cu^{t\ge 0}[-i]$) is denoted by $\cu^{t\le i}$ (resp. by $\cu^{t\ge i}$),
 and $\hrt$ is the full subcategory of $\cu$ whose object class is $\cu^{t=0}=\cu^{t\le 0}\cap \cu^{t\ge 0}$. An important property of $t$-structures is that the category $\hrt$ is abelian. 

3. Below we will mention the triangulated subcategory $\cu^b$ of $\cu$ whose object class equals $(\cup_{i\in \z}\cu^{t\le i})\cap (\cup_{i\in \z}\cu^{t\ge i})$. 
\end{rema}

\begin{coro}\label{capprox}
Let $\cp$ be a finite weakly negative set 
 of objects that compactly generates (a smashing triangulated category) $\cu$. 

1. Then $\cu$ is weakly approximable in the sense of \cite[Definition 0.21]{neesat}.

2. Moreover, if $\cp=\cup_{0\le i\le n}\cp_i$ (for some $n\ge 0$) and  $\cp_i\perp \cp_j[1]$ whenever $i\le j$, then $\cu$ is also approximable in the sense of loc. cit.
\end{coro}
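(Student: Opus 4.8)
The plan is to deduce Corollary \ref{capprox} from Theorem \ref{t1} by unwinding Neeman's Definition 0.21. First I would record the precise meaning of (weak) approximability: it requires a $t$-structure whose aisle is compactly generated in the appropriate way, together with, for each object of the nonnegative part, a distinguished triangle decomposing it into a piece built boundedly from the generator and a highly positive (co)connective remainder. Lemma \ref{lts} already hands us the needed $t$-structure, namely the one with $\cu^{t\le 0}=B=[\cup_{i\ge 0}\cp[i]\brab$, using that $\cp$ is a finite (hence small) set of compact objects; so the main task is to produce the approximating triangles rather than the $t$-structure itself.

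For part 1 (weak approximability), the key input is the identity $B=A\star B[1]$ from Theorem \ref{t1}(1). I would read this as a ``weight-decomposition'' device: given any $M\in B=\cu^{t\le 0}$, the equality $B=A\star B[1]$ furnishes a distinguished triangle $A_M\to M\to (B_M)[1]\to A_M[1]$ with $A_M\in A=[\cp\brab$ and $B_M\in B$, i.e.\ $(B_M)[1]\in B[1]=\cu^{t\le -1}$. Iterating, one shifts the ``remainder'' arbitrarily far into $\cu^{t\le -n}$, while the extracted part $A_M$ lies in the extension-closure of $\cp$ itself and is therefore controlled by the generator in the bounded sense Neeman requires. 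I would check that $A$ sits inside the relevant bounded-below preaisle generated by $\{G\}$ (with $G=\bigoplus_{P\in\cp}P$), so that these triangles are precisely the ones witnessing the weak-approximability axiom, with the compact generator being $G$ and the metric/approximating data coming from the $t$-structure of Lemma \ref{lts}.

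For part 2 (genuine approximability), the additional axiom demands not merely a single-step extraction but that $A$ itself be the bounded preaisle generated by $G$, i.e.\ that objects of $A$ be built from $G$ in \emph{finitely many extension steps} with a uniform bound. This is exactly what Theorem \ref{t1}(2) supplies: under the hypotheses $\cp=\cup_{0\le i\le n}\cp_i$ with $\cp_i\perp\cp_j[1]$ for $i\le j$, we have the finite factorization $A=\cq_0\star\cq_1\star\dots\star\cq_n$. Thus every object of $A$ is an $(n+1)$-fold extension of coproducts of the $\cp_i$, giving the uniform bound on the number of extension steps (length $n+1$) that distinguishes approximability from its weak counterpart. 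I would combine this with the triangle from part 1 to realize each $M\in\cu^{t\le 0}$ as a bounded-length build-up from $G$ plus a remainder in $\cu^{t\le -n}$, matching Neeman's Definition 0.21(ii) exactly.

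The main obstacle I expect is not any deep construction but the bookkeeping of translating between our ``big extension-closure'' formalism and Neeman's axiomatics: verifying that $A=[\cp\brab$ coincides with (or is cofinal in) the bounded preaisle $\langle G\rangle$ appearing in loc.\ cit., confirming that the compact generator $G$ and the chosen $t$-structure satisfy the compatibility Neeman imposes (e.g.\ that $\cu^c$ and the bounded subcategory $\cu^b$ of Remark \ref{rts}(3) interact correctly), and pinning down that the finite length $n+1$ from Theorem \ref{t1}(2) is genuinely the quantity required by the approximability axiom. In short, the creative content is already in Theorem \ref{t1}; the work here is to present its two parts as exactly the data that Definition 0.21 calls for.
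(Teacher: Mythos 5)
Your proposal is correct and follows essentially the same route as the paper: the $t$-structure of Lemma \ref{lts}, the triangles supplied by Theorem \ref{t1}(1) to verify Definition 0.21(ii), and the factorization $A=\cq_0\star\dots\star\cq_n$ of Theorem \ref{t1}(2) to get the uniform bound $n+1$ for approximability. The ``bookkeeping'' you defer is precisely what the paper's proof fills in: reducing to the single compact generator $G$ (via retraction-closedness, i.e.\ Proposition 1.6.8 of \cite{neebook}), checking Definition 0.21(i) through $G[-2]\perp\cu^{t\le 0}$ using Proposition \ref{pstar}(\ref{iortc}), and, in part 2, absorbing retracts through the $\star$-operation via Proposition \ref{pstar}(\ref{istarkar}) so that $E$ indeed lands in Neeman's class $\overline{\lan G\ra}^{[0,0]}_{n+1}$ (which is built from coproducts of copies of $G$, not of elements of the $\cp_i$).
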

\begin{proof} 1. Take $G=\bigoplus_{P\in \cp}P$. Then for $\cp'=\{G\}$ the class  $ [\cup_{i\in\z}\cp'[i]\brab$ is retraction-closed according to  Proposition 1.6.8 of \cite{neebook}. Hence  $ [\cup_{i\in\z}\cp'[i]\brab= [\cup_{i\in\z}\cp[i]\brab$; thus $\cu$ is compactly generated by $\cp'$ (as well). 

Now we should choose a $t$-structure. Lemma \ref{lts} gives the existence of $t$ such that  $\cu^{t\le 0}=[\cup_{i\ge 0}\cp'[i] \brab$.
Then $G\in \cu^{t\le 0}$, and Proposition \ref{pstar}(\ref{iortc}) implies 
 $G[-2]\perp  \cu^{t\le 0}$. Thus the couple $(G,t)$ fulfills the assumptions of \cite[Definition 0.21(i)]{neesat}.

Next, for any $F\in \cu^{t\le 0}$ Theorem \ref{t1}(1) gives the existence of a distinguished triangle $E\to F\to D\to E[1]$ such that $E\in [\cp' \brab$ and $D\in \cu^{t\le -1}$. Now, 
 $[\cp' \brab$ obviously lies in the class $\overline{\lan G\ra}^{[0,0]}$ (see Reminder 0.8(xii) of ibid.); hence the assumptions of \cite[Definition 0.21(ii)]{neesat} are fulfilled as well.

2. We take a distinguished triangle $E\to F\to D\to E[1]$ as above and check that $E$ belongs to the class $\overline{\lan G\ra}^{[0,0]}_{n+1}$ as defined in Reminder 0.8(xi) of ibid. (see Remark \ref{rex2}(3) below for more detail on these two definitions).

  $E$ belongs to $[\cp' \brab\subset [\cp \brab$. According to Theorem \ref{t1}(2), the latter class lies in $\cq_0\star \cq_1\star\dots \star\cq_n$, where $\cq_i$  consists of all coproducts of elements of $\cp_i$. Applying Proposition \ref{pstar}(\ref{istarkar}) $n$ times we obtain that 
	$\cq_0\star \cq_1\star\dots \star\cq_n\subset \kar_{\cu}(\cq^{n+1})$, 
	 where $\cq^{n+1}$ is the $n+1$-th $\star$-power of the class $\cq$ consisting of coproducts of (copies of) $G$. Thus $E$ belongs to the class $ \overline{\lan G\ra}^{[0,0]}_{n+1}$ as desired.\end{proof}

\begin{rema}\label{rex2}
1. Thus if  $\cp$ is finite in  the  examples mentioned in Remark \ref{rex2} then we obtain a (weakly) approximable category $\cu$.

2. In the case where $\cp$ consists of a single element $G$ the  assumptions of Corollary \ref{capprox}(2) just mean that $G\perp G[i]$ for all $i>0$; thus we obtain a justification of Remark 3.3 of ibid. We will discuss another possible argument for it in  Remark \ref{rstws}(\ref{iadjarg}) below. 

3. The ("additional") assumption of Corollary \ref{capprox}(2) is somewhat annoying; yet it cannot be dropped. 
 Indeed, let us describe an example for Corollary \ref{capprox}(1) such that the category $\cu$ is not approximable.

Take $\cu'=D(\ab)$, and set $\cu\subset \cu'$ to be the subcategory compactly generated by $\cp=\{G\}$, where $G=\z/p\z$ for  a prime number $p$. 
Then $(\cu,\cp)$ satisfy the assumptions of Corollary \ref{capprox}(1). 

Now let us check that $\cu$ is not approximable. Facts 0.22 of \cite{neesat} imply  (cf. Definition 0.14 of ibid.) that if our category $\cu$ is approximable then for $t$ being the $t$-structure such that $\cu^{t\le 0}=[\cup_{i\ge 0}\cp[i] \brab$ (cf. the proof of Corollary \ref{capprox}) there exists an integer $A>0$ 
 that satisfies the following condition: for any $F\in \cu^{t\le 0}$ there  exists a distinguished triangle $E\to F\to D\to E[1]$ such that  $D\in \cu^{t\le -1}$ and $E$ is a retract of an element of the class $\operatorname{Coprod}_A(G[-A,A])$. Here  
$\operatorname{Coprod}_A(G[-A,A])$ denotes the $A$-th $\star$-power of the class $G[-A,A]$ that consists of coproducts of (copies of) $G[s]$ for $-A\le s\le A$. 
In our case the key observation is that any element of $G[-A,A]$ is annihilated by the multiplication by $p$; hence any element of $\overline{\lan G\ra}^{[-A,A]}_{A}=\kar_{\cu}\operatorname{Coprod}_A(G[-A,A])$ is  annihilated by the multiplication by $p^A$. 

We take $F=\z/p^{A+1}\z$ (alternatively, one can take $F=
\bigoplus_{m\ge 0}(\z/p^m\z)$). It is easily seen that $F$ belongs to $\cu^{t\le 0}$. Let us prove that there cannot exist a distinguished triangle $E\to F\to D\to E[1]$ as above. Obviously, the complexes (of abelian groups) in $\cu^{t\le -1}$ have zero cohomology in non-negative degrees. Hence 
 the zero degree cohomology group $H^0(E)$ should surject onto $H^0(F)=\z/p^{A+1}\z$; yet this is impossible since  $E$ is  annihilated by the multiplication by $p^A$.

More generally, one can argue similarly in the following setting: start from the derived category $\cu'$ of left $R$-modules, where $R$ is a (not necessarily commutative) unital associative ring, choose $r\in R$ that is not left invertible and also is not a right zero divisor, take $G=R/Rr$, and set $\cu$ to be the subcategory of $\cu'$ compactly generated by $\cp=\{G\}$.

Moreover, these examples are closely related to the category $D_{qc,Z}(X)$ mentioned in Facts 0.23 of ibid.
\end{rema}


\section{On applications of approximability (a reminder after A. Neeman)}\label{sappl}

The writing of this note was motivated by the nice properties of approximable triangulated categories established in \cite{neesat}. 
So now we will say a few (more) words about them.

Firstly, it appears that the consequences of weak approximability are not as nice as that of approximability. However, in section 2 of ibid. several properties of weakly approximable categories are established; probably, some of them are quite useful.

So, we only recall that in Theorem 0.3 of ibid. an $R$-linear approximable triangulated category $\cu$ that is compactly generated by $\cp=\{G\}$ is considered, where $R$ is a commutative (unital associative) noetherian ring, and it is assumed that the $R$-module $\cu(G[i],G)$ is finitely generated for any $i\in \z$.\footnote{Moreover, it is also assumed that  $G[i]\perp G$ for $i\ll 0$; however, this assumption appears to follow from Facts 0.22(ii) (along with  Definition 0.21(i) of ibid.).} Denote by $\cu^c$ the (triangulated) subcategory of compact objects of $\cu$,  define the subcategory $\cu_c^-$ of $\cu$ via  Definition 0.16 of ibid., and take $\cu_c^b$ to be the subcategory whose object class equals $\obj\cu_c^-\cap \obj \cu^b$   (see Remark \ref{rts}(3) for the definition of $\cu^b$); here the corresponding $t$-structure $t$ is the one with $\cu^{t\le 0}=[\cup_{i\ge 0}\cp[i] \brab$. 

Then Theorem 0.3 of ibid. says that  the restricted Yoneda functor $\mathcal{Y}:\cu\to \adfur(\cu^c{}\opp,R-\modd)$ (the category of $R$-linear functors into $R$-modules) that sends an object $N$ of $\cu$ into the restriction of the functor it represents to $\cu^c$ gives a full embedding of $\cu_c^b$ into $\adfur(\cu^c{}\opp,R-\modd)$. 
Moreover, the image $\mathcal{Y}(\obj\cu_c^b)$ essentially consists of those cohomological functors that satisfy the following finiteness condition: for any $M\in \obj \cu^c$ the module $H(M[i])$ is finitely generated for all $i\in \z$ and vanishes for almost all values of $i$.\footnote{And one can easily check that finiteness is fulfilled (in this setting) whenever the $R$-module $\bigoplus_{i\in \z} H(G[i])$ is finitely generated.} Furthermore, the first of these statements is a consequence of more general (and rather interesting) assertions of the theorem.

As shown in several papers of Neeman (see  Corollary 0.4 and Remark 0.7 of ibid.) and also in \cite{bvtr}, Brown representability-type statements of this sort are useful for the construction of adjoint functors and $t$-structures, whereas various categories of quasi-coherent 
 sheaves over (rather general) schemes give interesting examples.

\section{On the relation to (weak) weight structures}\label{swws}

Let us start from the definition of a weight structure.

\begin{defi}\label{dwstr}

I. A couple $(\cu_{w\le 0},\cu_{w\ge 0})$ of classes  
 of objects of $\cu$ 
will be said to give a weight structure $w$ on 
$\cu$ if 
 the following conditions are fulfilled.

(i) $\cu_{w\le 0}$ and $\cu_{w\ge 0}$ are 
retraction-closed in $\cu$ (i.e., contain all $\cu$-retracts of their objects).

(ii) {\bf Semi-invariance with respect to translations.}

$\cu_{w\le 0}\subset \cu_{w\le 0}[1]$ and $\cu_{w\ge 0}[1]\subset
\cu_{w\ge 0}$.

(iii) {\bf Orthogonality.}

$\cu_{w\le 0}\perp \cu_{w\ge 0}[1]$.

(iv) {\bf Weight decompositions}.

 For any $M\in\obj \cu$ there
exists a distinguished triangle
$$L_wM\to M\to R_wM {\to} L_wM[1]$$
such that $L_wM\in \cu_{w\le 0} $ and $ R_wM\in \cu_{w\ge 0}[1]$.
\end{defi}

We will also need the following definitions.

\begin{defi}\label{dwso}
Let $i,j\in \z$; assume that a triangulated category $\cu$ is endowed with a weight structure $w$.

\begin{enumerate}
\item\label{idh}
The full category $\hw\subset \cu$ whose objects are
$\cu_{w=0}=\cu_{w\ge 0}\cap \cu_{w\le 0}$ 
 is called the {\it heart} of 
$w$.

\item\label{id=i}
 $\cu_{w\ge i}$ (resp. $\cu_{w\le i}$, resp. $\cu_{w= i}$) will denote the class $\cu_{w\ge 0}[i]$ (resp. $\cu_{w\le 0}[i]$, resp. $\cu_{w= 0}[i]$).

\item\label{idrest}
Let $\du$ be a full triangulated subcategory of $\cu$.

We will say that $w$ {\it restricts} to $\du$ whenever the couple $w_{\du}= (\cu_{w\le 0}\cap \obj \du,\ \cu_{w\ge 0}\cap \obj \du)$ is a weight structure on $\du$.

\end{enumerate}
\end{defi}

\begin{rema}\label{rstws}

\begin{enumerate}
\item\label{iwneg} Let us describe the relation of the results of this paper to weight structures; we will also mention "weak versions" of the latter below.

Firstly, the orthogonality axiom in Definition \ref{dwstr} immediately implies that the class $\cu_{w=0}$ is negative; hence any its subclass also is. More generally,  the class $\cu_{[0,1]}=\cu_{w\le 1}\cap \cu_{w\ge 0}$ is weakly negative; hence one can take any finite set $\cp$ of compact objects in it and pass to the subcategory of $\cu$ compactly generated by $\cp$. Moreover, recall that in this case $\cp$ consists of a set of cones of some $\hw$-morphisms (see Proposition 1.2.4(10) of \cite{bsnew}), and the orthogonality condition for $\cp_i$  in Theorem \ref{t1}(2) and Corollary  \ref{capprox}(2) can be checked (for the corresponding two-term complexes) in the homotopy category $K(\hw)$ (see Theorem 3.3.1(VII) of \cite{bws}; yet pay attention to part \ref{irconv} of this remark). 

Secondly, one can often "reconstruct" a weight structure from a given negative subclass of $\cu$; see Corollary 2.1.2 of \cite{bonspkar}, and Corollary 2.3.1 and Theorem 2.2.1 of \cite{bsnew}. Moreover, if $\cp$ is negative in the setting of Theorem \ref{t1} then the triangles provided by part 1 of that theorem are weight decompositions of elements of the corresponding $\cu_{w\ge 0}=\cu^{t\le 0}$ (cf. part \ref{iweak} of this remark); respectively, our argument is closely related to the one used for the proof of (part 1 of) loc. cit. 

\item\label{iwloc} Another reason why weight structures are relevant for the current paper is that there are several interesting methods for constructing them. Firstly, according to Theorem  8.2.3 of \cite{bws},  weight structures may be glued similarly to $t$-structures (see the seminal section 1.4 of \cite{bbd}).

Secondly, weight structures "often descend to localizations". Let us formulate the corresponding criterion: if $\cu$ is endowed with a weight structure $w$ and $\du$ is a full triangulated subcategory of $\cu$ then for the Verdier localization functor $\pi:\cu\to \cu'=\cu/\du$ one says that {\it $w$ descends to $\cu'$} if $w'=(\kar_{\cu'}(F(\cu_{w\le 0})), \kar_{\cu'}(F(\cu_{w\ge 0})))$ is a weight structure on $\cu$ (cf. Proposition 3.1.1(1) and Remark 3.1.2 of \cite{bsnew}), and Theorem  3.1.3(1) of ibid. says that this happens if and only if  for any $N\in \obj \du$ there exists a $\du$-distinguished triangle
$D(N)=(LN \stackrel{i}{\to} N\stackrel{j}{\to} RN\to LN[1])$
such that $i$ factors through (an element of) $\cu_{w\le 0}$ and $j$ factors through $\cu_{w\ge 0}$. Now, this condition is certainly fulfilled if $w$ restricts to $\du$, but the latter assumption is very far from being necessary. For instance, one can take $\du$ that is compactly generated by any set of (compact) elements of $\cu_{[0,1]}$; see part 3(ii) of loc. cit. (or Theorem 4.3.1.4(1) of \cite{bososn}). 

Another important statement on the existence of weight structures is Theorem 5 of \cite{paucomp}; yet  
  it appears to be useless for the context of the current paper. 


\item\label{irconv}
In 
 this note we use the ``homological convention'' for weight structures. This is the convention used by several papers of the first author (however, in \cite{bws} the so-called cohomological convention was used). Now, in the homological convention the functor $[1]$ "shifts weights" by $1$; note in contrast that (in the cohomological convention for $t$-structures that originates from \cite{bbd} and was used in \cite{neesat}) $[1]$ "shifts $t$-degrees" by $-1$.  

\item\label{iweak} Actually, the assumptions on $w$ in Definition \ref{dwstr} may be weakened (without loosing the actuality of this definition for the purposes of this note). 

Firstly, (the somewhat technical) axiom (i) may be replaced by a weaker assumption that $\cu_{w\le 0}$ and $\cu_{w\ge 0}$  are extension-closed (see. Proposition 1.2.4(4) of \cite{bsnew}).  We will use the symbol (i') for this alternative axiom. 

A more serious modification is to replace axiom  (iii) in Definition \ref{dwstr} by (i') along with the following axiom (iii'): $\cu_{w\le 0}\perp \cu_{w\ge 0}[2]$. We will say that a couple $w$ satisfying this set of axioms is a {\it weak weight structure};\footnote{Actually, axiom (i) is not necessary for the purposes of the current paper.} 
this definition is a particular case of \cite[Definition 3.11]{brelmot}  (that corresponds to the case $F=0$ in the notation of loc. cit.; see Remark 3.12(2) of ibid.). Obviously, the  heart of a weak weight structure (defined 
 as in Definition \ref{dwso}(\ref{idh})) is weakly negative; hence one can get certain examples of (weak) approximability from mixed complexes of $l$-adic \'etale sheaves (of weight $0$) over a variety $X_0$ over a finite  field (see Proposition 3.17(1) of ibid.). Moreover, our argument in Theorem \ref{t1}(1) 
 gives weak weight decompositions for elements of $\cu_{w\ge 0}$ for the following weak weight structure $w$: $\cu_{w\ge 0}=\kar_{\cu}([\cup_{i\ge 0}\cp[i]\brab)$ and $\cu_{w\ge 0}=\kar_{\cu}([\cup_{i\le 0}\cp[i]\brab)$; here $\cp$ is an arbitrary weakly negative compactly generating class of objects of $\cu$. 

Now, one can check that weak weight structures can be glued  similarly to weight structures. Moreover, it appears that weak weight structures descend to localizations 
and the additional condition in Theorem \ref{t1}(2) (and so also in  Corollary  \ref{capprox}(2))  passes to localizations as well under certain reasonable assumptions; however, one should study the details here. 


\item\label{iadjarg} As we have said above, our arguments are closely related to the weight structure $w$ and also to the $t$-structure $t$ such that  $\cu_{w\ge 0}=\cu^{t\le 0}=[\cup_{i\ge 0}\cp[i]\brab$; so, $w$ is {\it left adjacent} to $t$ (see Proposition 1.3.3 of \cite{bvtr} and  Definition 4.4.1 of \cite{bws}). Now, if one wants to argue in terms of $t$ (cf. Remark 3.3 of \cite{neesat}) then it is difficult to avoid the fact that $\hrt$ is naturally equivalent to the category $\adfu(\hu,\ab)$, 
 where $\hu\subset \cu$ 
 and $\obj \hu=\cp$; see Theorem 4.5.2(II.2) of \cite{bws}  or Theorem 3.3.1(II.3) of \cite{bvtr}.\footnote{Actually, to apply these theorems one should note that this functor category essentially will not change if one replaces $\hu$ by its additive hull or by $\kar_{\cu}\hu$. Alternatively, one can easily deduce the statement in question from   Theorem 3.2.2(6) of \cite{bwcp}.} However, if $\cp$ is finite (recall that this is the case for  Corollary  \ref{capprox}) then one may also apply Theorem 1.3 of \cite{hoshino}; this appears to be the only way to justify (the general case of) Remark 3.3 of \cite{neesat} without mentioning weight structures. 

 Still the authors do not know how to extend this "adjacent" argument to weak weight structures (that correspond to part 1 of Corollary  \ref{capprox}(1) and also to the case $n>0$ of its part 2). Note also that if the class $\cp$ in Theorem \ref{t1} is negative but not small 
then it is not clear whether the corresponding adjacent $t$-structure exists (whereas  $w$ as above exists according to Corollary 2.3.1  of \cite{bsnew}; cf. also Remark 2.3.2(4) of ibid.). 


\end{enumerate}

\end{rema}

\end{document}